\pgfplotsset{width=6.5cm,compat=newest}
\numberwithin{table}{section}    
\numberwithin{figure}{section}   
\numberwithin{equation}{section} 
\newcommand{\dual}[2]{\langle #1 ,\, #2 \rangle}
\renewcommand{\S}{\mathcal{S}}
\newtheorem{assumption}[theorem]{Assumption}
\begin{document}
\title{On the optimal control of some nonsmooth distributed parameter systems arising in mechanics}
\date{\today}
\author{J.C.~De Los Reyes\footnotemark[4]}
\renewcommand{\thefootnote}{\fnsymbol{footnote}}
\footnotetext[4]{Research Center on Mathematical Modelling (MODEMAT), Escuela Polit\'ecnica Nacional, Quito, Ecuador}

\maketitle
\begin{abstract}
  Variational inequalities are an important mathematical tool for modelling  free boundary problems that arise in different application areas. Due to the intricate nonsmooth structure of the resulting models, their analysis and optimization is a difficult task that has drawn the attention of researchers for several decades. In this paper we focus on a class of variational inequalities, called \emph{of the second kind}, with a twofold purpose. First, we aim at giving a glance at some of the most prominent applications of these types of variational inequalities in mechanics, and the related analytical and numerical difficulties. Second, we consider optimal control problems constrained by these variational inequalities and provide a thorough discussion on the existence of Lagrange multipliers and the different types of optimality systems that can be derived for the characterization of local minima. The article ends with a discussion of the main challenges and future perspectives of this important problem class.
\end{abstract}

\begin{keywords}
  Nonsmooth distributed parameter systems, variational inequalities, free-boundary problems, optimal control, inverse problems.
\end{keywords}


\section{Introduction}
Free-boundary problems are known for their high mathematical complexity, both in terms of analysis and numerical modelling. The understanding and tracking of the free-boundary adds extra difficulties to the already challenging distributed parameter systems, and demands the use of a wide range of sofhisticated mathematical tools \cite{friedman2010variational,baiocchivariational}.


A particular class of free-boundary problems are those where a threshold behavior takes place. This means that there is a certain physical behavior if a quantity of interest remains below a given threshold value, and a different behavior if this quantity exceeds that limit. There exists a significant variety of phenomena that fit into this category. Among others we mention: viscoplastic fluids, plate deformation, frictional contact mechanics, elastoplasticity, among others.

In many circumstances, the previously described behavior can be modeled as a non-differentiable energy minimization problem of the form:
\begin{equation} \label{eq: intro: energy minimization}
  \min_{u \in V} ~E(u)=\frac{1}{2} \dual{Au}{u}+ j(u) - \dual{f}{u},
\end{equation}
where $V$ is a reflexive Banach space such that $V \hookrightarrow L^2(\Omega) \hookrightarrow V'$ with compact and continuous embedding, $A: V \mapsto V'$ is a linear elliptic operator, $\dual{\cdot}{\cdot}$ stands for the duality product between $V'$ and $V$, $f \in V'$, and $j(\cdot)$ is a convex functional of the form $$j(v)= \beta \int_{\S} |K v| ~ds, \qquad \text{with } \beta >0,$$ where $K$ stands for a linear operator, $|\cdot|$ for the Euclidean norm and $\S \subset \Omega \subset \mathbb R^d$ is the subdomain or boundary part where the nonsmooth behaviour occurs. The positive constant $\beta$
stands for the threshold coefficient, the key value for the determination of the free-boundary of the problem.

Thanks to the convexity of the energy cost functional, a necessary and sufficient optimality condition for problem \eqref{eq: intro: energy minimization} is given by what is known as a \emph{variational inequality of the second kind}: Find $u \in V$ such that
\begin{equation} \label{eq:VI}
  \dual{Au}{v-u} + \beta \int_{\S} |K v| ~ds - \beta \int_{\S} |K u| ~ds \geq \dual{f}{v-u}, \text{ for all } v \in V.
\end{equation}

Available analytical results for \eqref{eq:VI} comprise existence and uniqueness of solutions \cite{DuvautLions1976}, extra regularity of solutions \cite{Brezis1971} and, in some cases, geometric studies of the free-boundary \cite{MoMia}. Moreover, analytical results for the dynamical counterpart of \eqref{eq:VI} have also been obtained, including well-posedness and long-time behaviour of solutions \cite{DuvautLions1976}.

In practice, the application of models such as \eqref{eq:VI} requires the knowledge of the different parameters involved, especially the yield coefficient $\beta$. For estimating these quantities, a classical least-squares fitting functional with the variational inequality \eqref{eq:VI} as constraint is usually proposed.

Moreover, in some situations it is of interest not only to know the physical behaviour of the system, but also being able to act on it to achieve some predetermined objective. These types of control problems generally involve the minimization of a functional on top of the variational inequality.

Both the inverse parameter estimation problem as well as the optimal control problem present serious analytical and numerical difficulties which will be discussed in this paper by means of a model optimal control problem.

The outline of the manuscript is as follows. In Section 2 some relevant application examples are reviewed and some special properties of the problems highlighted. In Section 3 a thorough revision of the optimization results for these types of systems are summarized. Finally, in Section 4 some challenges and future perspectives are discussed.


\section{Applications}
\subsection{Viscoplastic fluid flow}
Viscoplastic fluids are characterized by the existence of a stress threshold that determines a dual behavior of the material. If the stress is below this threshold the material behaves as a rigid solid, while above this limit the behavior is that of a viscous fluid \cite{Bonn,huilgol2015fluid}.

These types of materials were investigated in the late nineteenth and early twentieth centuries by several prominent fluid mechanicists (see \cite{huilgol2015fluid} for more details). A first mathematical model was proposed by Eugene Bingham in 1922 to describe the behavior of certain suspensions. Such materials are now known precisely as Bingham fluids, in honor of the founder of rheology.

The classical steady state Bingham model considers the fluid dynamics equations with a non-differentiable term for the Cauchy tensor:
\begin{subequations} \label{eq: Bingham model}
\begin{align}
&-\textrm{Div}\,\sigma+(u\cdot \nabla) u +\nabla p= f &&\textrm{ in }\Omega, \label{eq: Bingham model1}\\
& \textrm{div}\,u =0 &&\textrm{ in }\Omega,\\
& \mathbf \sigma =2 \mu \mathcal{E} (u) + \beta
\frac{\mathcal{E}(u)}{|\mathcal{E} (u) |}, && \textrm{ if }
\mathcal{E}(u)\neq 0,
\\ &|\sigma|  \leq \beta,\hspace{0.2cm}&& \textrm{ if }\mathcal{E}(u)=0,
\end{align}
\end{subequations}
where $\mu >0$ stands for the viscosity coefficient, $\beta >0$ for the plasticity
threshold (yield stress), $f$ for the body force and Div is the
row-wise divergence operator. The deviatoric part of the Cauchy stress tensor is
denoted by $\sigma$ and $\mathcal{E}$ stands for the rate of
strain tensor, defined by
$\mathcal{E}(u):=\frac{1}{2}\left( \nabla u +\nabla u^T \right).$ The model has to be endowed with suitable boundary conditions.

The first equation in \eqref{eq: Bingham model} corresponds to the conservation of momentum, while the second one corresponds to the incompressibility condition for a fluid. As can be observed from the third equation, the Cauchy tensor is fully characterized in spatial points where the stress tensor is different from zero. If that is not the case, the material cannot be described as a fluid and the areas in which this happens are precisely known as the rigid zones (see Figure \ref{fig:driven cavity}).
\begin{figure}[H]
  \begin{center}
   \includegraphics[height=6.2cm,width=7cm]{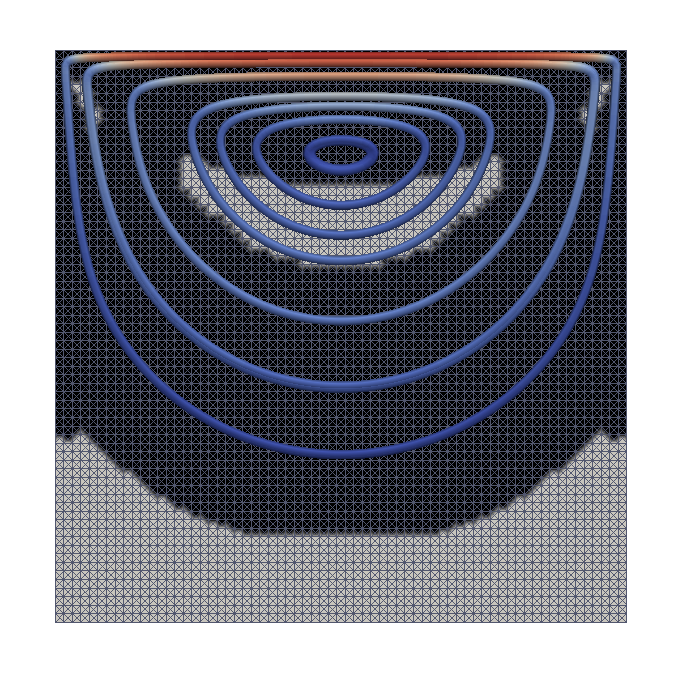}
  \caption{Steady state of a viscoplastic Bingham fluid in a wall driven cavity: Rigid zones (light gray), fluid region (black).}
  \label{fig:driven cavity}
\end{center}
\end{figure}

In the 1960s the development of functional analytic tools made it possible to study in depth a variational model for these types of materials. The resulting inequality problem consists in finding $u \in V$ such that
\begin{equation} \label{eq:probvarinvec}
\dual{Au}{v-u} + c(u,u,{v}- u)+ j(v)- j (u)\geq \langle f,v-u\rangle,\,\textrm{for all $v\in V$},
\end{equation}
where
\begin{align*}
& \dual{Au}{v}:= 2\mu\int_{\Omega}(\mathcal{E}(u):\mathcal{E}{(v)})~dx,
&& j({v}) :=\sqrt{2} \beta \int_{\Omega}{| \mathcal{E}{(v)} |} ~dx,\\
& c(u,{v},{w}) := \int_{\Omega}{w^T \left( (u\cdot
\nabla)\,{v}\right)} ~dx, &&
\end{align*}
$(C : D)=\textrm{tr}(CD^T)$, with $C,D \in \mathbb R^{d \times d}$, stands for the Frobenius scalar product and $V$ is a suitable solenoidal function space. This reformulation enabled the study of existence, uniqueness and regularity of solutions \cite{MoMia,DuvautLions1972,FuchsSeregin}. In particular, if the convective term $(u \cdot \nabla)u$ is dismissed in \eqref{eq: Bingham model1}, the resulting variational model corresponds to the necessary and sufficient optimality condition of a convex energy minimization problem as in \eqref{eq:VI}.

The numerical solution of the Bingham model has also been widely investigated. On the one hand, several discretization schemes (finite differences, finite elements, etc.) with the corresponding approximation results have been considered \cite{Glowinski,MuravlevaOlshanskii2009}. On the other hand, numerical algorithms for coping with the nonsmoothness of the underlying model have been devised. In this context we mention augmented Lagrangian methods \cite{Glowinski,AposporidisEtAl2010}, dual based algorithms (Uzawa, ISTA, FISTA, etc.) \cite{Glowinski,TRESKATIS2016115} and semismooth Newton methods \cite{dlRG2,ItoKunischBook}.


In addition to Bingham viscoplastic fluids, depending on the constitutive relation between the shear rate and the shear stress, the fluid at hand can be casted as shear thickening or shear thinning (see Figure \ref{fig: constitutive}). Important applications of these constitutive laws take place in, for instance, food industry and geophysical fluids \cite{CiarletGlowinski2010}.
\begin{figure}[H]
  \begin{center}
    \begin{tikzpicture}
    	\begin{axis}[
        height=6cm,
        width=8cm,
        xmin=0,   xmax=2.,
    		xlabel={shear rate},
    		ylabel={shear stress},
        legend style={at={(0.5,0.95)}},
    	]
      \addplot[gray,domain=0:2]{x +1};                     
      \addplot[red,domain=0:2]{1+(abs(x)^(2.5-2))*x};       
      \addplot[blue,domain=0:2]{1+(abs(x)^(1.75-2))*x};       
      \legend{{\scriptsize Bingham fluid}, {\scriptsize Shear thickening}, {\scriptsize Shear thinning}}
    	\end{axis}
    \end{tikzpicture}
    \caption{Constitutive laws for different kind of viscoplastic fluids} \label{fig: constitutive}
  \end{center}
\end{figure}
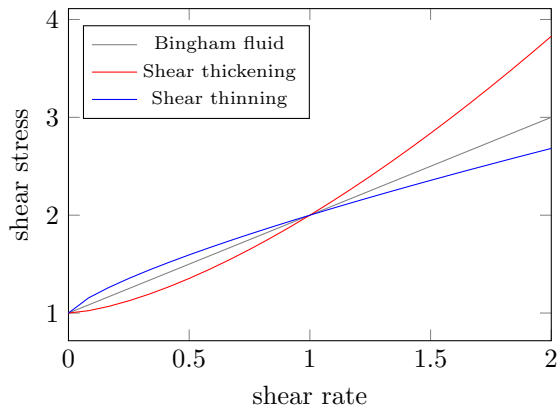

\subsection{Geophysical fluids}
A phenomenon of importance in the context of geophysics is the flow of volcanic lava. In this case the goal is to be able to numerically simulate the flow to the greatest detail, in order to predict and mitigate possible disasters (see, e.g., \cite{Fink}). As for optimization, rather than controlling the flow, the goal is to estimate the constitutive rheological parameter as best as possible, and from that, determine the explosiveness of a certain volcano.

The lava model, in addition to the complicated rheology, must be coupled with a heat transfer model that is responsible for changes in both the viscous and rheological behavior of the material. The complete model to be solved in three dimensions, in a moving domain $\Omega_t$, is given by:
\begin{subequations} \label{eq: lava full model}
\begin{align}
&\rho \frac{\partial u}{\partial t} -\textrm{Div}\,\sigma+ \rho (u\cdot \nabla) u +\nabla p= \rho f &&\textrm{ in }\Omega_t \times (0,T),\\
& \rho \frac{\partial e}{\partial t}+ \rho (u\cdot \nabla)e= k \Delta w+ \sum_{i,j} \sigma_{ij} \frac{\partial u_i}{\partial x_j} &&\textrm{ in }\Omega_t \times (0,T),\\
& \textrm{div}\,u =0 &&\textrm{ in }\Omega_t \times (0,T),\\
& \mathbf \sigma =2 \mu \mathcal{E} (u) +\beta
\frac{\mathcal{E}(u)}{|\mathcal{E} (u) |}, && \textrm{ if }
\mathcal{E}(u)\neq 0,
\\ &|\sigma|  \leq \beta,\hspace{0.2cm}&& \textrm{ if }\mathcal{E}(u)=0,
\end{align}
\end{subequations}
complemented with suitable initial and boundary conditions. In addition to the previous notation, $\rho$ stands for the fluid density, $e$ for the enthalpy function, $w$ is the temperature, $k$ is the thermal conductivity and $f$ is the gravity force. In the simplest case, the enthalpy is given by the product between a constant specific heat and the temperature (see, e.g., \cite{costa2005computational} for further details).

In general, scarce analytical and numerical investigation has been carried out for model \eqref{eq: lava full model}. Due to its high complexity, questions about existence and uniqueness, solution regularity and properties of the solution operator are still open. Moreover, its numerical treatment posses several challenges concerning discretization, solution algorithms, tracking of the interface and computational efficiency.

A common approach, among computational geophysicists, to deal with \eqref{eq: lava full model} consists in using approximations that reduce the dimensionality of the problem and allow to solve it computationally in a more straightforward way. A very popular technique is the shallow water approximation \cite{bernabeu2016modelling}, which enables a faster numerical solution of the problem without dismissing the topography of the terrain, a crucial variable in this type of phenomena. The price to pay for this simplification is the obtention of a system of hyperbolic conservation laws, which demand sophisticated techniques for their analysis and numerical solution.

\subsection{Elastoplasticity}
The transition from an elastic to a plastic regime in a solid body subject to different loads is a natural candidate phenomena to be modeled with variational inequalities, since this transition occurs precisely when the stress exceeds a certain threshold.

One of the interesting mathematical properties of these types of phenomena is that they can be variationally formulated in terms of their primal or their dual variables \cite{temamplast,han2012plasticity}. In the first case, a variational inequality of the second kind is obtained, while in the other, an obstacle-like inequality is derived. This fact has been much utilized in the analysis and numerical simulation of this type of problems.

Under the assumption of small strains, a quasi-static linear kinematic hardening model with a von Mises yield condition in its \emph{primal formulation} is given in the following way. Consider a solid body $\Omega \in \mathbb R^3$ that is clamped on a nonvanishing Dirichlet part $\Gamma_D$ of its boundary $\Gamma$, and it is subject to boundary loads on the remaining Neumann part $\Gamma_F$. The variables of the problem are the \emph{displacement} $u \in V:=H_{\Gamma_D}^1(\Omega;\mathbb R^3)$ and the \emph{plastic strain} $q \in Q :=\{ p \in L^2(\Omega; \mathbb R^{3 \times 3}_{sym}): \text{trace}(p)=0 \}$, with $\mathbb{R}^{3 \times 3}_{sym}$ the space of symmetric matrices. The problem consists in finding $W=(u,p)$ which satisfies
\begin{equation*}
  \dual{A W}{Y-W}_{Z',Z} + j(q)-j(p) \geq \dual{f}{v-u}, \text{ for all } Y=(v,q) \in Z=V \times Q,
\end{equation*}
where
\begin{align*}
  &\dual{A W}{Y}_{Z',Z}= \int_\Omega \left( \mathcal{E}(u)-p: \mathbb C(\mathcal{E}(v)-q) \right) ~dx + \int_\Omega (p: \mathbb H q) ~dx,\\
  &j(p)= \beta \int_\Omega |p| ~dx,\\
  &\dual{f}{v}= \int_\Omega l v ~dx + \int_{\Gamma_F} g \cdot v ~ds,
\end{align*}
$\mathbb C$ represents the material's fourth-order elasticity tensor and $\mathbb H$ is the hardening modulus. The constant $\beta > 0$ denotes the material's yield stress and the data $l$ and $g$ are the volume and boundary loads, respectively.

If the temperature is also taken into account, the phenomenon gains in complexity and its modeling is possible only through the use of primal variables, that is, through variational inequalities of the second kind \cite{ottosen2005mechanics}.
At present, this phenomenon is intensively investigated in terms of the analysis of solutions \cite{bartels2008thermoviscoplasticity,chelminski2006mathematical} and their numerical approximation \cite{bartels2013numerical}.

Optimal control problems in elastoplasticity have also been studied in recent years, yielding optimality conditions for its primal and dual variants \cite{herzog2012c,HerzogMeyerWachsmuth,dlRHM13}, as well as numerical algorithms for solving the problem \cite{herzog2014optimal}. The thermoviscoplastic case is also currently being addressed with very challenging and promising perspectives \cite{herzog2015existence}.

\subsection{Contact mechanics}
One of the most emblematic problems modeled by variational inequalities occurs in contact mechanics and is known as Signorini's problem. It consists in determining the deformation of a certain surface subject to external forces, and in contact with an obstacle. This last fact induces forces in the normal direction to the contact surface, modeled by variational inequalities of the first kind, and tangential forces (friction) along the contact region, usually modeled by inequalities of the second kind. This combination of phenomena, however, occurs in a non-linear fashion, leading to a more complicated category of inequalities known as quasi-variational.

Contact problems have been widely addressed in the literature. The first works of Heinrich Hertz in the nineteenth century were followed by several contributions on the modelling of the problems, the analysis of existence and uniqueness of solutions \cite{EkeTemam,Stampacchia/Kinderlehrer,SofoneaMatei2009}, the regularity of solutions and of the free-boundary \cite{Stampacchia/Kinderlehrer,caffarelli2005geometric}, and the numerical approximation and solution of the models \cite{Glowinski,KikuchiOden,wriggers2006computational}.

A frequently used version of Signorini's contact problem is the one with so-called Coulomb friction law. For its formulation, let us consider $\Omega \subset \mathbb R^d$, $d=2,3,$ a bounded domain with regular boundary
$\Gamma$. The boundary can be further divided into three non-intersecting
components $\Gamma=\Gamma_D \uplus \Gamma_F \uplus \Gamma_C$, corresponding to the
Dirichlet, Neumann and contact boundary sectors, respectively. The friction forces
intervene only on the boundary sector where contact with the rigid foundation
takes place. The problem consists in finding a displacement vector $u$ that solves the following system:
\begin{subequations} \label{eq: Signorini with Coulomb}
  \begin{align}
    &- \textrm{Div} \, \sigma =f_1 &&\text{ in }\Omega,\\
    & u =0 &&\text{ on }\Gamma_D,\\
    & \sigma_N (u)  =t &&\text{ on }\Gamma_F\\
    & u_N \leq g, \quad \sigma_N (u) \leq 0, \quad (u_N-g) \, \sigma_N (u)=0 &&\textrm{ on }\Gamma_C,\\
    & \sigma_T (u)= - \beta(u) \frac{u_T}{|u_T |}, && \textrm{ on } \{ x
    \in \Gamma_C: u_T \neq 0 \}, \label{eq:general friction law 1}\\
    &|\sigma_T (u)|  \leq \beta(u), && \textrm{ on } \{ x \in \Gamma_C: u_T = 0
    \},\label{eq:general friction law 2}
  \end{align}
\end{subequations}
where $g$ denotes the gap between the bodies and $N$ and $T$ stand for the unit outward normal and unit tangential vector, respectively. The notation $u_N$ and $u_T$ stands for the product $u \cdot N$ and $u \cdot T$, respectively.
The stress-strain relation for a linear elastic material is given by Hooke’s law:
\begin{equation}
  \sigma=  2 \mu \mathcal E(u) + \lambda ~\text{tr}(\mathcal E(u)),
\end{equation}
where $\mathcal E(u):=\frac{1}{2}\left( \nabla u +\nabla u^T \right)$ stands for the rate of strain tensor and $\lambda>0$ and $\mu >0$ are the Lam\'e parameters.

If the friction effect is dismissed, the resulting model may be formulated as a variational inequality of the first kind: Find $u \in \mathcal K := \{ v \in H^1_{\Gamma_D}(\Omega): v_N \leq g \text{ a.e. on } \Gamma_C \}$ such that
\begin{equation} \label{eq: contact VI first}
  \dual{Au}{v-u} \geq \dual{f}{v-u}, \text{ for all }v \in \mathcal K,
\end{equation}
where
\begin{equation*}
 \dual{Au}{v}:= \int_{\Omega}(\sigma :\mathcal{E}{v})~dx, \quad  \quad
 \dual{f}{v} :=\int_{\Omega}{f_1 \,v}~dx+ \int_{\Gamma_F} t \, v ~ds
\end{equation*}
and $H^1_{\Gamma_D}(\Omega):= \{ v \in H^1(\Omega): v=0 \text{ on }\Gamma_D \}$.

If, on the contrary, the contact surface is assumed to be known and only the nonsmoothness due to friction is taken into account, the phenomenon is modeled by a variational inequality of the second kind:
\begin{equation} \label{eq: contact VI second}
  \dual{Au}{v-u} +\beta \int_{\Gamma_F}|v| ~ds- \beta \int_{\Gamma_F}{|u|}~ds \geq \dual{f}{v-u}, \text{ for all }v \in V:= H^1_{\Gamma_D}(\Omega).
\end{equation}


Optimal control and inverse problems in contact mechanics have been addressed in, e.g., \cite{BergouniouxMignot2000,BeremlijskiEtAl2002,BermudezSaguez1987,jarusekoutrata2007,betz2015optimal}, generally using the simplified versions \eqref{eq: contact VI first} or \eqref{eq: contact VI second} of Signorini's problem. Optimality conditions of more or less sharpness are currently available for these types of problems, which have been derived using similar techniques to those that will be explained in the forthcoming Section \ref{sec: optimal}.
In the case of the complete model the problem is still farely open. The complexity of the combined nonlinearities and nonsmoothness makes the analysis extremely imbricated. Some initial attempts to deal with such structures have been carried out in \cite{dietrich2001optimal}.


\section{Optimal control} \label{sec: optimal}
The development of optimal control theory is closely linked to the space race in the twentieth century. The moon landing problem is a classical application example of this theory, where the trajectory, velocity and acceleration of a space vehicle had to be optimally determined in each instant of time to achieve a desired goal. The celebrated Pontryagin maximum principle \cite{pontryagin1987mathematical} was, within this framework, a milestone that made it possible to actually solve the resulting control problems.

The extension of the theory to models with partial differential equations started to take place in the 1960s. The main goal in this case was to extend and develop techniques to cope with cases where the spatial variable also played a crucial role, which occurs, for instance, in diffusion processes. At present there are important established techniques for the mathematical analysis and numerical solution of such PDE control problems \cite{Li1,troltzsch2010optimal,de2015numerical}.

Further, optimal control problems governed by variational inequalities are related to the design of mechanisms to act on the dynamics of a nonsmooth distributed parameter system to guide them towards some desired tarjet. Such problems where considered since the 1970s, with a renewed interest in the field in recent years, due to the wide applicability of the results.

In addition to control, some relevant and related problems take place when trying to estimate different coefficients involved in the distributed parameter system. In the case of the variational inequalities under consideration, estimating the threshold coefficient by solving the resulting inverse problem appears to be of high relevance for the understanding of the material at hand.

In this section we present the main up-to-date results related to the optimal control of variational inequalities of the second kind by means of the following model problem: Find an optimal control $f \in L^2(\Omega)$ and a corresponding state $u \in V$ solution of
\begin{equation} \label{eq: optimal control problem} \tag{$\mathcal P$}
\left\{
\begin{array}{ll}
\min \limits_{(u,f) \in V \times L^2(\Omega)} ~J(u,f)=\frac{1}{2} \int_\Omega |u-z_d|^2 ~dx + \frac{\alpha}{2} \int_\Omega |f|^2 ~dx \vspace{0.3cm}\\
\text{subject to:  }\\[3pt]
\quad \dual{Au}{v-u}+\beta \int_\Omega |v|~dx -\beta \int_\Omega |u|~dx \geq  \dual{f}{v -u},  &\text{ for all } v \in V.
\end{array}
\right.
\end{equation}
where $\alpha >0$, $\Omega \subset \mathbb R^d$ is a bounded domain and $z_d \in L^2(\Omega)$. We recall that $V$ is assumed to be a reflexive Banach space such that $V \hookrightarrow L^2(\Omega) \hookrightarrow V'$ with compact and continuous embedding, $A: V \mapsto V'$ is a linear elliptic operator and $\dual{\cdot}{\cdot}$ stands for the duality product between $V'$ and $V$. Although more general cost functionals may be considered as well, we restrict our attention here to the tracking type one which is the first and more intuitive choice.

Using the direct method of the calculus of variations \cite{de2015numerical} it can be proved that there exists a unique solution to the lower level problem in \eqref{eq: optimal control problem}. Moreover, by duality arguments \cite{EkeTemam}, there exists a dual multiplier $q \in L^{\infty}(\Omega)$ such that the following primal-dual system holds:
\begin{equation}
\begin{array}{ll}
\dual{Au}{v}+\int_\Omega q \, v~dx = \dual{f}{v},  &\text{ for all } v \in V, \\
[3pt]
q(x) u(x) = \beta |u(x)|,  &\text{ a.e. in } \Omega,\\
[3pt]
|q(x)| \leq \beta,  &\text{ a.e. in } \Omega.
\end{array}
\end{equation}
The presence of the dual variable is not only important theoretically, but also numerically, since it gives rise to important dual and primal-dual solution algorithms \cite{Glowinski}.

With help of primal and dual variables we may define the active, inactive and biactive sets for the problem as follows:
\begin{align*}
  \mathcal A &= \{ x\in \Omega: u(x) = 0\} && \text{(active set)},\\
  \mathcal I &= \{ x\in \Omega: u(x) \neq 0\} && \text{(inactive set)},\\
  \mathcal B &= \{ x\in \Omega: u(x) = 0 \, \land \, |q(x)|=\beta \} && \text{(biactive set)} .
\end{align*}

Next we will show that there exists a global optimal solution to problem \eqref{eq: optimal control problem}, which is, however, not necessarily unique. Although in practice it may not be possible to compute global but only local minima, the next global existence result constitutes the first step towards the successful analysis of the optimal control problem at hand.
\begin{theorem}
  Problem \eqref{eq: optimal control problem} has at least one optimal solution.
\end{theorem}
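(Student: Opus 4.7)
The plan is to apply the direct method of the calculus of variations. Since $J \geq 0$, the infimum $m := \inf J$ is finite, so I can fix a minimizing sequence $(u_n, f_n) \in V \times L^2(\Omega)$ with $J(u_n, f_n) \to m$. The first step is to extract bounds. From $J(u_n,f_n) \leq m+1$ together with $\alpha > 0$, the sequence $(f_n)$ is bounded in $L^2(\Omega)$, so (possibly after a subsequence) $f_n \rightharpoonup \bar f$ weakly in $L^2(\Omega)$. For the states I would test the variational inequality satisfied by $u_n$ with $v = 0$ to obtain
\begin{equation*}
  \dual{A u_n}{u_n} + \beta \int_\Omega |u_n|\,dx \leq \dual{f_n}{u_n},
\end{equation*}
and then invoke the coercivity of the elliptic operator $A$ on $V$ together with continuity of $V \hookrightarrow L^2(\Omega)$ to conclude that $(u_n)$ is bounded in $V$. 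Reflexivity of $V$ yields a weak limit $\bar u \in V$, and the compact embedding $V \hookrightarrow L^2(\Omega)$ upgrades this to strong convergence $u_n \to \bar u$ in $L^2(\Omega)$.

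The central step, and the place where care is needed, is passing to the limit in the variational inequality
\begin{equation*}
  \dual{A u_n}{v} - \dual{A u_n}{u_n} + \beta \int_\Omega |v|\,dx - \beta \int_\Omega |u_n|\,dx \geq \dual{f_n}{v-u_n}.
\end{equation*}
The linear term $\dual{A u_n}{v}$ converges to $\dual{A \bar u}{v}$ by weak convergence. The right-hand side $\dual{f_n}{v-u_n}$ converges to $\dual{\bar f}{v-\bar u}$ because $f_n \rightharpoonup \bar f$ weakly in $L^2$ and $u_n \to \bar u$ strongly in $L^2$. Strong $L^2$-convergence (with $\Omega$ bounded) also gives $\int_\Omega |u_n|\,dx \to \int_\Omega |\bar u|\,dx$. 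The genuinely nontrivial term is $\dual{A u_n}{u_n}$: I would use the standard trick
\begin{equation*}
  \dual{A u_n}{u_n} = \dual{A(u_n-\bar u)}{u_n-\bar u} + \dual{A \bar u}{u_n} + \dual{A(u_n-\bar u)}{\bar u},
\end{equation*}
where the first summand is nonnegative by coercivity and the remaining two pass to $\dual{A \bar u}{\bar u}$ by weak convergence. This yields the weak lower semicontinuity $\liminf_{n} \dual{A u_n}{u_n} \geq \dual{A \bar u}{\bar u}$, which is exactly what is needed to preserve the inequality sign when taking $\limsup$ on the left-hand side.

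Combining these limits shows that $\bar u$ satisfies the variational inequality with right-hand side $\bar f$, i.e.\ $(\bar u, \bar f)$ is admissible for \eqref{eq: optimal control problem}. It remains to verify that $J(\bar u, \bar f) \leq m$. Since $J$ is the sum of two convex continuous functionals on $L^2(\Omega)$, it is weakly lower semicontinuous, and using $u_n \to \bar u$ strongly in $L^2$ together with $f_n \rightharpoonup \bar f$ weakly in $L^2$ gives
\begin{equation*}
  J(\bar u, \bar f) \leq \liminf_{n \to \infty} J(u_n, f_n) = m,
\end{equation*}
so $(\bar u, \bar f)$ is an optimal solution. I expect the main obstacle to be exactly the weak-lower-semicontinuity estimate for $\dual{A u_n}{u_n}$, since the minimizing sequence is only known to converge weakly in $V$; the coercivity identity above is the standard device that handles it, provided $A$ is $V$-coercive (which is implicit in calling it elliptic).
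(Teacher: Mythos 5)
Your proof is correct and follows essentially the same route as the paper: the direct method with a minimizing sequence, weak compactness of $(f_n)$ and $(u_n)$, strong convergence of the states via the compact embedding, passage to the limit in the variational inequality, and weak lower semicontinuity of $J$. The only (harmless) difference is that you establish $\liminf_n \dual{Au_n}{u_n} \geq \dual{A\bar u}{\bar u}$ by the explicit coercivity decomposition, whereas the paper invokes the weak lower semicontinuity of the convex continuous functional $u \mapsto \dual{Au}{u}$ directly; these are the same fact.
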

\begin{proof}
Since the cost functional is bounded from below, there exists a minimizing sequence $\{ (u_n,f_n) \}$, i.e., $J(u_n,f_n) \to \inf_{f} J(u,f),$
where $u_n$ stands for the unique solution to
\begin{equation} \label{eq:NS-VI-3}
\dual{Au_n}{v-u_n}+\beta \int_\Omega |v|~dx -\beta \int_\Omega |u_n|~dx = \dual{f_n}{v-u_n},  \text{ for all } v \in V.
\end{equation}
From the structure of the cost functional it also follows that $\{ f_n \}$ is
bounded in $L^2(\Omega)$ and, thanks to \eqref{eq:NS-VI-3}, also
$\{u_n\}$ is bounded in $V$.
Consequently, there exists a subsequence (denoted in the same way) such that
\begin{equation*}
    f_n \rightharpoonup \hat f \text{ weakly in } L^2(\Omega) \hspace{0.5cm}\text{ and } \hspace{0.5cm} u_n \rightharpoonup \hat u \text{ weakly in }V.
\end{equation*}
Due to the compact embedding $L^2(\Omega) \hookrightarrow
V'$ it then follows that
\begin{equation*}
    u_n \to \hat u \text{ strongly in } V'.
\end{equation*}

From \eqref{eq:NS-VI-3} we directly obtain that
\begin{equation*}
 \dual{Au_n}{u_n}-\dual{Au_n}{v} +j(u_n)-j(v) - \langle f_n ,u_n- v \rangle \leq 0, ~\forall v \in V.
\end{equation*}
Thanks to the convexity and continuity of $\dual{A \cdot}{\cdot}$ and $j(\cdot)$ we may take the limit inferior in the previous inequality and obtain that
\begin{equation}
\dual{A \hat u}{\hat u}-\dual{A \hat u}{v}+ j(\hat u)-j(v) - \langle \hat f ,\hat u- v \rangle \leq 0, ~\forall v \in V,
\end{equation}
which implies that $\hat u$ solves the lower level problem with $\hat f$ on the right hand side.

Thanks to the weakly lower semicontinuity of the cost functional we finally obtain that
\begin{equation*}
    J(\hat u,\hat f) \leq \liminf_{n \to \infty} J(u(f_n),f_n) = \inf_{f} J(u(f),f),
\end{equation*}
which implies the result.
\end{proof}

Once the existence of optimal solutions is guaranteed, the next step consists in characterizing local optima by means of first order optimality conditions, also called optimality systems.

\subsection{Optimality systems}
As in finite dimensions, it is in general not possible to verify standard constraint qualification conditions for infinite dimensional nonsmooth optimization problems like \eqref{eq: optimal control problem}. Consequently, in order to get a Karush-Kuhn-Tucker optimality system, alternative techniques have to be devised.

One of the possibilities to derive optimality conditions for problem \eqref{eq: optimal control problem} consists in regularizing the non-differentiable term, getting rid of the nonsmoothness. In \cite{Barbu1984}, for instance, a general regularization procedure is presented, where the functional $j(\cdot)$ is replaced by a smooth approximation of it. Specifically, global type regularizations like
$$\phi_{\gamma}(x):= \beta \sqrt{|x|^2+ \gamma^{-2}} \quad \text{ or } \quad \phi_{\gamma}(x):=\frac{\beta}{\gamma + 1} \left(\gamma |x| \right)^{\frac{\gamma+1}{\gamma}},$$
with $\gamma >0$, were frequently used. The resulting regularized control problems can be analyzed using PDE-constrained optimization techniques, yielding the following first order optimality system of Karush-Kuhn-Tucker type:
\begin{subequations} \label{eq: reg. OS}
\begin{align}
& \dual{Au_\gamma}{v}+\int_\Omega q_\gamma \, v~dx = \dual{f_\gamma}{v},&& \text{ for all } v \in V, \label{eq: reg. OS1}\\
& q_\gamma(x) = \phi_\gamma'(x) && \text{ a.e. in }\Omega, \label{eq: reg. OS2}\\
& \dual{A^* p_\gamma}{v}+\int_\Omega \phi_\gamma''(u_\gamma)^*p_\gamma \, v~dx = \int_\Omega (u_\gamma-z_d) \,v ~dx,&& \text{ for all } v \in V,\label{eq: reg. OS3}\\
& \alpha f_\gamma+p_\gamma = 0&& \text{ a.e. in }\Omega \label{eq: reg. OS4}
\end{align}
\end{subequations}

Concerning the consistency of the regularization, usually two types of results shall be proved. The first one guarantees that the family of regularized controls $\{ f_\gamma \}_{\gamma >0}$ contains a weakly convergent subsequence whose limit precisely solves problem \eqref{eq: optimal control problem}. The other type of consistency result assures that, given a local optimal control which satisfies a quadratic growth condition, there is a family of regularized controls that approximate it.

Once the consistency has been analyzed, an optimality system for the original problem can be obtained by passing to the limit in system \eqref{eq: reg. OS} (see, e.g., \cite{Barbu1984,BonnansTiba1991}).
\begin{theorem}
  Let $f \in L^2(\Omega)$ be a local optimal solution of \eqref{eq: optimal control problem} and $u \in V$ its corresponding state. Let $\{ f_\gamma \}_{\gamma >0}$ be a
  sequence of regularized optimal controls such that
  $f_\gamma \rightharpoonup f$ weakly in $L^2(\Omega)$, as $\gamma \to \infty$. Then there exist multipliers $p \in V$ and $\xi \in V'$ such that the following system holds:
  \begin{subequations} \label{eq: OSweak}
  \begin{align}
  & \dual{Au}{v}+\int_\Omega q \, v~dx = \dual{f}{v},&& \text{ for all } v \in V, \label{eq: OSweak1}\\
  & q(x)  u(x) =\beta | u(x)| && \text{ a.e. in }\Omega, \label{eq: OSweak2}\\
  & |q(x)| \leq \beta && \text{ a.e. in }\Omega, \label{eq: OSweak3}\\
  & \dual{A^* p}{v}+\langle \xi, v \rangle = \int_\Omega (u-z_d) \,v ~dx,&& \text{ for all } v \in V \label{eq: OSweak4}\\
  & \alpha f+p=0&& \text{ a.e. in }\Omega. \label{eq: OSweak5}
  \end{align}
  \end{subequations}
\end{theorem}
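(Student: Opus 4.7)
The plan is to pass to the limit $\gamma \to \infty$ in each equation of the regularized system \eqref{eq: reg. OS} and to identify suitable limits $(u,q,p,\xi)$. First I would establish convergence of the primal quantities: by the hypothesis $f_\gamma \rightharpoonup f$ in $L^2(\Omega)$ and the consistency of the regularization scheme, one extracts a subsequence with $u_\gamma \rightharpoonup u$ in $V$ and, via the compact embedding $V \hookrightarrow L^2(\Omega)$, $u_\gamma \to u$ strongly in $L^2(\Omega)$. For the chosen regularizations, $q_\gamma = \phi'_\gamma(u_\gamma)$ is uniformly bounded by $\beta$ in $L^\infty(\Omega)$, so a further subsequence satisfies $q_\gamma \rightharpoonup^* q$ in $L^\infty(\Omega)$ with $|q| \leq \beta$ a.e. Passing to the limit in \eqref{eq: reg. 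OS1} then delivers \eqref{eq: OSweak1} and \eqref{eq: OSweak3}; the complementarity relation \eqref{eq: OSweak2} is recovered from the pointwise limit $\phi'_\gamma(x) \cdot x \to \beta |x|$, the uniform bound $|\phi'_\gamma(x) \cdot x| \leq \beta |x|$, a.e.\ convergence of $u_\gamma$ to $u$, and dominated convergence to identify $q \, u = \beta |u|$ a.e.

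Next I would handle the adjoint block. Equation \eqref{eq: reg. OS4} gives $p_\gamma = -\alpha f_\gamma$, so $\{p_\gamma\}$ is bounded in $L^2(\Omega)$. Testing \eqref{eq: reg. OS3} with $p_\gamma$, invoking the ellipticity of $A$ together with $\phi''_\gamma \geq 0$ (from convexity of the regularization), yields a uniform $V$-bound on $p_\gamma$. A further subsequence satisfies $p_\gamma \rightharpoonup p$ in $V$, and \eqref{eq: OSweak5} follows by passing to the limit in \eqref{eq: reg. OS4}.

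The main obstacle is the curvature term $\phi''_\gamma(u_\gamma)^* p_\gamma$ in \eqref{eq: reg. OS3}: the factor $\phi''_\gamma$ blows up on the active set as $\gamma \to \infty$, so no direct limit of the product is available. The workaround I would adopt is implicit: rewriting \eqref{eq: reg. OS3} as
\begin{equation*}
\xi_\gamma := \phi_\gamma''(u_\gamma)^* p_\gamma = (u_\gamma - z_d) - A^* p_\gamma \quad \text{in } V',
\end{equation*}
the right-hand side is already bounded in $V'$ by the previous step, so along a subsequence $\xi_\gamma \rightharpoonup \xi$ in $V'$; passing to the limit produces \eqref{eq: OSweak4}. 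Notably, this argument gives no further structural information about $\xi$ (such as sign or support conditions on the biactive or active sets) — precisely why \eqref{eq: OSweak} is called the \emph{weak} optimality system, and sharper variants require additional techniques beyond this compactness step.
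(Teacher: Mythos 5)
Your proposal is correct and follows essentially the same route as the paper: a uniform $V$-bound on $p_\gamma$ obtained by testing the regularized adjoint equation with $p_\gamma$ and exploiting the convexity of $\phi_\gamma$ (so that the curvature term is nonnegative) together with the ellipticity of $A$, followed by reading off the boundedness of $\xi_\gamma = \phi_\gamma''(u_\gamma)^* p_\gamma$ in $V'$ from the equation itself and passing to weak limits. Your treatment of the primal block is somewhat more detailed than the paper's (which simply invokes continuity of the regularized solution operator), but the argument is the same in substance.
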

\begin{proof}
  Equations \eqref{eq: OSweak1}-\eqref{eq: OSweak2} are
  obtained directly from the continuity of the regularized solution operator.
  Testing \eqref{eq: reg. OS3} with $v=p_\gamma$ yields
  \begin{equation} \label{eq:adjoint eq multiplied by p}
   \dual{A^* p_\gamma}{p_\gamma}+\int_\Omega p_\gamma \, \phi_\gamma''(u_\gamma)^* p_\gamma~dx = \int_\Omega (u_\gamma-z_d) \,p_\gamma ~dx
  \end{equation}
Thanks to the convexity of the regularizing function $\phi_\gamma$, it follows that $$\int_\Omega p_\gamma \phi_\gamma''(u_\gamma)^* p_\gamma~dx \geq 0,$$ which together with the ellipticity of the operator $A$ and the boundedness of the sequence $\{u_\gamma\}_{\gamma >0}$ implies that
\begin{equation} \label{eq:bound of reg. seq. of adjoints}
 \|p_\gamma \|_V \leq C_p, \quad \text{ for all }\gamma>0,
\end{equation}
i.e., the sequence $\{ p_\gamma \}_{\gamma >0}$ is bounded in $V$ and there exists a subsequence
(denoted in the same way) and a limit $p \in V$ such that
\begin{align*}
p_\gamma \rightharpoonup p  \text{ weakly in }V \qquad \text{and} \qquad
A p_\gamma \rightharpoonup A p  \text{ weakly in }V'.
\end{align*}

From the the latter and the boundedness of $\{u_\gamma\}_{\gamma >0}$, we obtain that $\{ \phi_\gamma''(u_\gamma)^*p_\gamma \}_{\gamma >0}$ is bounded in $V'$. Consequently there exists a
subsequence (denoted the same) and a limit $\xi \in V'$ such
that
$$\phi_\gamma''(u_\gamma)^*p_\gamma \rightharpoonup \xi \text{ weakly in } V'.$$
Passing to the limit in \eqref{eq: reg. OS3} and \eqref{eq: reg. OS4} then yields the result.
\end{proof}

Although system \eqref{eq: OSweak} includes equation \eqref{eq: OSweak4} for the adjoint state $p$, it does not characterize its behavior in relation to the state $u$, the dual multiplier $q$ or the additional multiplier $\xi$. This is a main drawback which makes the characterization incomplete, i.e., there may exist several solutions of system \eqref{eq: OSweak} that do not correspond to stationary points of the optimal control problem.

Through the use of tailored local regularizations, more detailed optimality systems can be obtained \cite{Delosreyes2009}. In particular, relations between the quantities $u,~q, ~p$ and $\xi$ are obtained within the optimality condition, ressembling what is known as Clarke-stationarity in finite-dimensional nonsmooth optimization \cite{sun2006optimization}. Such tailored regularizations seek to locally approximate the generalized derivative of the Euclidean norm, that is, the regularized function coincides exactly with the generalized derivative, except in a neighborhood of the non-differentiable elements. In particular, the following smoothing function
\begin{equation}
\phi_{\gamma}'(x)=
\begin{cases}
\beta \frac{x}{|x|} &\text{ if }~\gamma |x| \geq \beta + \frac{1}{2\gamma},\\
 \frac{x}{|x|} (\beta- \frac{\gamma}{2} (\beta- \gamma |x|+\frac{1}{2\gamma})^2) &\text{ if }~\beta-\frac{1}{2\gamma}\leq \gamma |x| \leq \beta+\frac{1}{2\gamma},\\
\gamma x &\text{ if }~\gamma |x| \leq \beta-\frac{1}{2\gamma},
\end{cases}
\end{equation}
for $\gamma$ sufficiently large, has been proposed, yielding the following result \cite{Delosreyes2009}.

\begin{theorem}
  Let $f \in L^2(\Omega)$ be a local optimal solution of \eqref{eq: optimal control problem} and $u \in V$ its corresponding state. Let $\{ f_\gamma \}_{\gamma >0}$ be a
  sequence of (locally) regularized optimal controls such that
  $f_\gamma \rightharpoonup f$ weakly in $L^2(\Omega)$, as $\gamma \to \infty$. Then there exist multipliers $p \in V$ and $\xi \in V'$ such that the following system holds:
  \begin{subequations} \label{eq: OS with local regularization}
  \begin{align}
  & \dual{Au}{v}+ \int_\Omega q \, v~dx = \dual{f}{v},&& \text{ for all } v \in V,\\
  & q(x)  u(x) =\beta | u(x)| && \text{ a.e. in }\Omega,\\
  & |q(x)| \leq \beta && \text{ a.e. in }\Omega,\\
  & \dual{A^* p}{v}+\langle \xi, v \rangle = \int_\Omega (u-z_d) \,v ~dx,&& \text{ for all } v \in V,\\
  & \alpha f+p=0&& \text{ a.e. in }\Omega,
  \end{align}
  and, additionally,
  \begin{align} \label{eq: compl in OS with local reg}
    & p(x)=0 && \text{ a.e. in }\mathcal I:=\{ x: |q(x)| < \beta \},\\
& \dual{\xi}{p} \geq 0,\\
   & \dual{\xi}{u}=0.
 \end{align}
\end{subequations}
\end{theorem}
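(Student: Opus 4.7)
The plan is to inherit the weak-limit machinery of the preceding theorem and upgrade it using the piecewise-explicit structure of the new local regularizer. I would first repeat that earlier argument verbatim: test the regularized adjoint \eqref{eq: reg. OS3} with $p_\gamma$, observe that $\int_\Omega p_\gamma \phi_\gamma''(u_\gamma)^* p_\gamma\,dx \geq 0$ because the new $\phi_\gamma$ is still convex on each of its three branches (and therefore globally), extract subsequences with $p_\gamma \rightharpoonup p$ in $V$ and $\phi_\gamma''(u_\gamma)^* p_\gamma \rightharpoonup \xi$ in $V'$, and pass to the limit in \eqref{eq: reg. OS} to recover the state equation for $u$, the complementarity pair for $q$, the adjoint equation with $\xi$, and $\alpha f+p=0$. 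At this point the first five identities of \eqref{eq: OS with local regularization} are in place exactly as in the previous theorem.

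To obtain $p = 0$ a.e.\ on $\mathcal I = \{|q|<\beta\}$, I would exhaust $\mathcal I$ by the level sets $M_\delta = \{|q|\leq \beta-\delta\}$. A.e.\ convergence along a subsequence of $q_\gamma = \phi_\gamma'(u_\gamma)$ to $q$ combined with Egorov's theorem forces $|q_\gamma| \leq \beta - 1/(2\gamma)$ on all but a small subset of $M_\delta$ for $\gamma$ large; those points lie in the third branch of $\phi_\gamma'$, where $\phi_\gamma''(u_\gamma) = \gamma I$. The uniform bound $\int p_\gamma \phi_\gamma''(u_\gamma) p_\gamma\,dx \leq C$ then yields $\gamma\,\|p_\gamma\|_{L^2(M_\delta)}^2 \leq C$, hence $p_\gamma \to 0$ in $L^2(M_\delta)$; letting $\delta \downarrow 0$ gives $p = 0$ on $\mathcal I$. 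For the sign condition $\dual{\xi}{p} \geq 0$ I would rewrite \eqref{eq:adjoint eq multiplied by p} as $\int p_\gamma \phi_\gamma''(u_\gamma) p_\gamma\,dx = \int (u_\gamma-z_d) p_\gamma\,dx - \dual{A p_\gamma}{p_\gamma}$, take $\limsup$ using the strong $L^2$-convergence $u_\gamma \to u$ and the weak lower semicontinuity $\liminf \dual{A p_\gamma}{p_\gamma} \geq \dual{A p}{p}$, and identify the resulting upper bound with $\dual{\xi}{p}$ by testing the limit adjoint with $v = p$.

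The identity $\dual{\xi}{u} = 0$ is the hardest part and is precisely the payoff of the tailored regularization. I would aim to show $\int_\Omega \phi_\gamma''(u_\gamma) u_\gamma \cdot p_\gamma\,dx \to 0$ and then transfer the pairing from $u_\gamma$ to $u$ through the strong convergence $u_\gamma \to u$ in $L^2$ provided by the compact embedding $V \hookrightarrow L^2(\Omega)$, which together with the weak $V'$-convergence $\phi_\gamma''(u_\gamma)^* p_\gamma \rightharpoonup \xi$ yields $\dual{\xi}{u} = 0$. On branch one a direct computation gives $\phi_\gamma''(u_\gamma) u_\gamma = 0$ \emph{pointwise} (the Hessian of $\beta|x|$ annihilates $x$), so only branches two and three contribute. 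On branch three one has $|u_\gamma| \leq \beta/\gamma$ and $\phi_\gamma''(u_\gamma) = \gamma I$, hence $|\phi_\gamma''(u_\gamma) u_\gamma| \leq \beta$; combined with the $L^2$-decay of $p_\gamma$ on $\{|q|<\beta\}$ from the previous step and the fact that $u=0$ there (forced by the complementarity $qu=\beta|u|$ together with $|q|<\beta$), this contribution goes to zero. The genuine obstacle is the transition branch two, where $\phi_\gamma''(u_\gamma) = O(\gamma)$ and $|u_\gamma| = O(1/\gamma)$ produce an $O(1)$ pointwise integrand: one must verify that the measure of this thin transition layer collapses to a null set fast enough, via a careful level-set argument around the free boundary $\{|q|=\beta\}$ sharpened by the a.e.\ convergence of $u_\gamma$ and $q_\gamma$.
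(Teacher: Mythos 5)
The paper offers no proof of this theorem---it is quoted from the literature on local regularization---so your sketch has to stand on its own. Its architecture (reuse the weak-limit extraction from the proof of the preceding theorem, then mine the three-branch structure of $\phi_\gamma'$ for the extra relations \eqref{eq: compl in OS with local reg}) is the right one and matches the cited source. The first five identities and the sign condition $\dual{\xi}{p}\geq 0$ are handled correctly; in particular your detour through \eqref{eq:adjoint eq multiplied by p}, a $\limsup$, and weak lower semicontinuity of $\dual{A p_\gamma}{p_\gamma}$ is exactly how one circumvents the fact that a weakly convergent sequence in $V'$ paired with a weakly convergent sequence in $V$ need not pass to the limit. (One quibble: ``convex on each branch, therefore globally'' is not a valid implication in general; what actually holds is that $\phi_\gamma'$ is continuous and monotone, whence $\phi_\gamma''\geq 0$ a.e.)

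Two steps have genuine gaps. First, the argument for $p=0$ on $\mathcal I$ rests on ``a.e.\ convergence along a subsequence of $q_\gamma$ to $q$.'' This is not available for free: $q_\gamma=\phi_\gamma'(u_\gamma)$ is only bounded in $L^\infty(\Omega)$, hence converges weak-$*$. Pointwise convergence can be salvaged on $\{u\neq 0\}$ (there $\gamma|u_\gamma|\to\infty$, so eventually branch one applies and $q_\gamma\to\beta u/|u|$), but $\mathcal I=\{|q|<\beta\}$ is contained in $\{u=0\}$ by complementarity, which is precisely where a.e.\ convergence of $u_\gamma$ gives no control over which branch contains $u_\gamma(x)$. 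You must either establish strong (or a.e.) convergence of $q_\gamma$, or recast the argument in terms of the sets $\{\gamma|u_\gamma|\leq\beta-1/(2\gamma)\}$ and their weak-$*$ limiting densities; this is where the real work of the theorem lies. Second, for $\dual{\xi}{u}=0$ the ``transfer of the pairing from $u_\gamma$ to $u$'' requires $\dual{\phi_\gamma''(u_\gamma)^*p_\gamma}{u_\gamma-u}\to 0$. Since $\phi_\gamma''(u_\gamma)^*p_\gamma$ is bounded only in $V'$ (its $L^2$ norm can grow like $\sqrt{\gamma}$ on the third branch, where $\gamma\|p_\gamma\|_{L^2}^2\leq C$ only gives $\gamma\|p_\gamma\|_{L^2}\leq C\sqrt{\gamma}$), strong convergence of $u_\gamma$ in $L^2$ via the compact embedding is not enough; you need strong convergence $u_\gamma\to u$ in $V$, which must be proved separately from the regularized VI. Finally, the branch-two contribution that you leave open is not settled by showing that the transition layer shrinks to a null set---it need not, since it can converge to the whole biactive set. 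The correct mechanism is a weighted Cauchy--Schwarz estimate: writing $s=\beta-\gamma|u_\gamma|+\tfrac{1}{2\gamma}\in[0,1/\gamma]$ on that branch, one has
\begin{equation*}
\int_{\mathrm{br.\,2}}\bigl|p_\gamma\,\phi_\gamma''(u_\gamma)u_\gamma\bigr|~dx \;\leq\; \Bigl(\int_\Omega p_\gamma\,\phi_\gamma''(u_\gamma)p_\gamma~dx\Bigr)^{1/2}\Bigl(\int_{\mathrm{br.\,2}} s\,(\gamma|u_\gamma|)^2~dx\Bigr)^{1/2} \;=\; O(\gamma^{-1/2}),
\end{equation*}
which exploits the smallness factor $s\leq 1/\gamma$ inside $\phi_\gamma''=\gamma^2 s$ together with the uniform bound furnished by \eqref{eq:adjoint eq multiplied by p}. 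With these three repairs the sketch becomes a proof.
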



In addition to the complementarity relations, system \eqref{eq: OS with local regularization} has as main advantage the fact that it can be derived for different types of controls (distributed, boundary, coefficients) and in the presence of additional control, mixed or state constraints.


Alternatively, in order to derive a stronger optimality system, the nonsmooth properties of the control-to-state operator, including some sort of differentiability, have to be carefully analyzed (see, e.g., \cite{Mignot1976,MignotPuel1984,DelosReyesMeyer2015}). In the next result we show that such solution operator satisfies a Lipschitz property.

\begin{lemma}\label{lem:lipschitz}
 For every $f\in V'$ there exists a unique solution $u\in V$ of
\begin{equation} \label{eq: VI Lipschitz}
  \dual{Au}{v-u}+\beta \int_\Omega |v|~dx -\beta \int_\Omega |u|~dx \geq \dual{f}{v -u},  \quad \text{ for all } v \in V,
\end{equation}
 which we denote by $u = S(f)$. The associated solution operator $S: V' \to V$ is globally Lipschitz continuous, i.e., there exists a constant $L > 0$
 such that
 \begin{equation}
  \|S(f_1) - S(f_2)\|_V \leq L \, \|f_1 - f_2\|_{V'} \quad \forall \, f_1, f_2 \in V'.
 \end{equation}
\end{lemma}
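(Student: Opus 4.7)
The plan is to handle existence/uniqueness first, then use the classical ``test each VI against the other solution'' trick to extract Lipschitz continuity, with the key cancellation being that the nondifferentiable terms $\beta\int_\Omega|v|\,dx$ appear symmetrically and drop out.

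For existence and uniqueness of $u\in V$ solving \eqref{eq: VI Lipschitz}, I would observe that \eqref{eq: VI Lipschitz} is the necessary and sufficient optimality condition of the strictly convex, coercive energy minimization problem
\[
\min_{u\in V}\;\tfrac12\dual{Au}{u}+\beta\int_\Omega|u|\,dx-\dual{f}{u},
\]
which inherits coercivity from the ellipticity of $A$ and strict convexity from the same source. Existence then follows by the direct method of calculus of variations (weak lower semicontinuity of each term, coercivity giving a bounded minimizing sequence, weak compactness in the reflexive space $V$), and uniqueness follows from strict convexity. This is exactly the framework already invoked in the existence proof for problem \eqref{eq: optimal control problem} above, so essentially no new machinery is needed.

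For the Lipschitz estimate, let $u_i=S(f_i)$ for $i=1,2$. I would test the VI for $u_1$ with $v=u_2$ and the VI for $u_2$ with $v=u_1$:
\begin{align*}
\dual{Au_1}{u_2-u_1}+\beta\int_\Omega|u_2|\,dx-\beta\int_\Omega|u_1|\,dx&\geq\dual{f_1}{u_2-u_1},\\
\dual{Au_2}{u_1-u_2}+\beta\int_\Omega|u_1|\,dx-\beta\int_\Omega|u_2|\,dx&\geq\dual{f_2}{u_1-u_2}.
\end{align*}
Adding these two inequalities, the nonsmooth $\beta$-integrals cancel completely, leaving
\[
-\dual{A(u_1-u_2)}{u_1-u_2}\;\geq\;-\dual{f_1-f_2}{u_1-u_2},
\]
i.e.\ $\dual{A(u_1-u_2)}{u_1-u_2}\leq\dual{f_1-f_2}{u_1-u_2}$.

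To conclude, I would invoke the ellipticity of $A$ (coercivity constant $c>0$) on the left-hand side and the duality pairing estimate on the right:
\[
c\,\|u_1-u_2\|_V^2\;\leq\;\|f_1-f_2\|_{V'}\,\|u_1-u_2\|_V,
\]
which yields $\|u_1-u_2\|_V\leq L\,\|f_1-f_2\|_{V'}$ with $L=1/c$ (and also gives an independent proof of uniqueness by taking $f_1=f_2$). Honestly, there is no real obstacle here: the only subtle point is ensuring the cancellation of the $j$-terms, which hinges on the fact that for VIs \emph{of the second kind} the nonsmooth term enters in the form $j(v)-j(u)$ rather than as an indicator of a constraint set, so swapping roles of $u_1$ and $u_2$ produces the perfect sign pattern. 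This is the structural reason why VIs of the second kind enjoy such a clean Lipschitz property, in contrast with VIs of the first kind where the admissible set can depend on the data.
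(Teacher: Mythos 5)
Your proof is correct and follows essentially the same route as the paper: the Lipschitz estimate is obtained, exactly as in the paper, by testing each inequality with the other solution, adding so that the $\beta\int_\Omega|\cdot|\,dx$ terms cancel, and invoking the ellipticity of $A$. The only cosmetic difference is in the existence/uniqueness step, where you argue via the direct method for the associated strictly convex energy while the paper cites maximal monotonicity of $A+\partial\|\cdot\|_{L^1(\Omega)}$; both are standard and equivalent here.
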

\begin{proof}
 Existence and uniqueness follows by standard arguments from the maximal monotonicity of $A + \partial \| \cdot \|_{L^1(\Omega)}$,
 see for instance \cite{Barbu1993}.
 To prove the Lipschitz continuity we test the variational inequality \eqref{eq: VI Lipschitz} for $u_1 = S(f_1)$ with $u_2 = S(f_2)$ and vice versa and add the arising
 inequalities to obtain
 \begin{equation*}
  \dual{A(u_1 - u_2)}{u_1 - u_2} \leq \dual{f_1 - f_2}{u_1 - u_2}.
 \end{equation*}
 The ellipticity of $A$ then yields the result.
\end{proof}

In addition to the Lipschitz continuity, the directional differentiability of the solution operator is indispensable in order to derive stronger optimality conditions. The obtention of such a result, however, requires some additional assumptions about the structure of the biactive set and the regularity of the solution.

\begin{assumption} \label{assu: structural}
  The active set $\mathcal A = \{ x\in \Omega: u(x) = 0\}$ satisfies the following conditions:
  \begin{enumerate}
   \item\label{assu:active1} $\mathcal A = \mathcal A_1 \cup \mathcal A_0$, where $\mathcal A_1$ has positive measure and $\mathcal A_0$ has zero capacity \cite{attouch2014variational}.
   \item\label{assu:active2} $\mathcal A_1$ is closed with non-empty interior.
   Moreover, it holds $\mathcal A_1 = \overline{\text{int}(\mathcal A_1)}$.
   \item\label{assu:active3} For the set $\mathcal J:= \Omega\setminus \mathcal A_1$ it holds
   \begin{equation}\label{eq:innererrand}
    \partial\mathcal J \setminus (\partial\mathcal J\cap\partial\Omega) = \partial\mathcal A_1\setminus(\partial\mathcal A_1\cap\partial\Omega),
   \end{equation}
   and both $\mathcal A_1$ and $\mathcal J$ are supposed to have regular boundaries.
   That is, the connected components of $\mathcal J$ and $\mathcal A_1$ have positive distance from each other and
   the boundaries of each of them satisfies the cone condition \cite{gri85}.
  \end{enumerate}
\end{assumption}

  \begin{figure}
    \begin{center}
    \includegraphics[height=4.5cm]{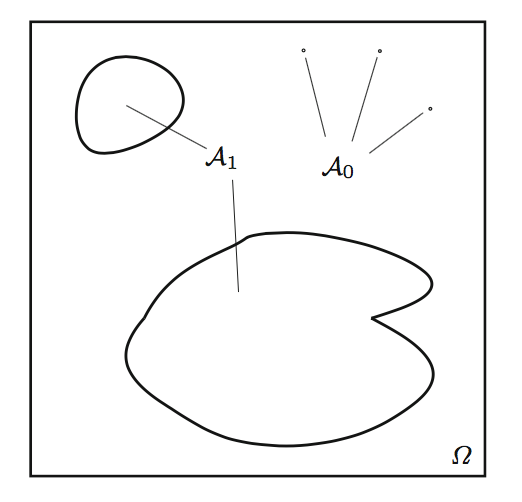} \includegraphics[height=4.5cm]{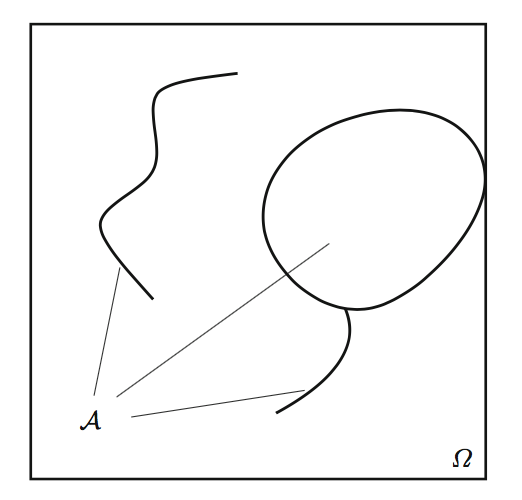}
    \caption{Allowed (left) and not allowed (right) active sets according to Assumption \ref{assu: structural}}
    \label{fig: active}
    \end{center}
  \end{figure}

Assumption \ref{assu: structural} has been relaxed in \cite{christofmeyer2017} allowing for the presence of $d-1$ dimensional active subsets. An alternative polyhedricity hypothesis has been considered recently in \cite{hintermuller2017directional}. Although the latter apparently avoids structural assumptions on the active set, in the recent work \cite{christof2017non} it is shown that in order to get polyhedricity, structural assumptions on the active set are actually unavoidable.

\begin{theorem}\label{thm:ablvi}
 Let $f,h \in L^r(\Omega)$ with $r > \max\{d/2,1\}$ be given. Suppose further that Assumption \ref{assu: structural}
is fulfilled by $u = S(f)$ and the associated slack variable $q$, and that both functions are continuous. Then there holds
 \begin{equation}\label{eq:weaklim}
  \frac{S(f + t\,h) - S(f)}{t} \rightharpoonup \eta \quad \text{weak in } V, \quad \text{as } t \searrow 0,
 \end{equation}
 where $\eta \in V$ solves the following VI of first kind:
 \begin{equation}\label{eq:ablvi}
 \begin{aligned}
  \eta \in \mathcal K(u),\quad \dual{A\eta}{v-\eta} \geq \dual{h}{v-\eta}, \quad \forall\, v\in \mathcal K(u),
 \end{aligned}
 \end{equation}
 with $$\mathcal K(u)=\begin{aligned}[t]
  \{v\in V: \;\, & v(x) = 0 \text{ a.e., where } |q(x)| < \beta,\\
  & v(x)q(x) \geq 0 \text{ a.e., where } |q(x)| = \beta \text{ and } u(x) = 0\}.
 \end{aligned}$$
\end{theorem}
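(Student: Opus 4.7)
The plan is to adapt the classical Mignot conical differentiability argument to our VI of the second kind. Let $u_t := S(f+th)$ and form the difference quotients $\eta_t := (u_t - u)/t$. Since the hypotheses on $r$ and the embedding $V \hookrightarrow L^2(\Omega) \hookrightarrow V'$ imply $h \in V'$, Lemma \ref{lem:lipschitz} gives $\|\eta_t\|_V \leq L\|h\|_{V'}$ uniformly in $t$. Along a subsequence (not relabeled) we then have $\eta_t \rightharpoonup \eta$ weakly in $V$. Let $q,q_t \in L^\infty(\Omega)$ be the slack variables associated with $u$ and $u_t$. Subtracting the two primal--dual systems and dividing by $t$ yields
$$A \eta_t + \frac{q_t - q}{t} = h \quad \text{in } V',$$
so $(q_t-q)/t$ is bounded in $V'$ and, up to a further subsequence, converges weakly to some $\lambda \in V'$ with $A\eta + \lambda = h$.

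The core step is to identify $(\eta,\lambda)$ as the solution--multiplier pair of \eqref{eq:ablvi}. I would first show $\eta \in \mathcal K(u)$ by pointwise analysis enabled by the continuity of $u$ and $q$ and by Assumption \ref{assu: structural}. On the strongly active set $\{|q|<\beta\}$, continuity of $q$ (and the smallness of $q_t - q$ inherited from the Lipschitz property) forces $|q_t|<\beta$ locally for $t$ small, hence $u_t = 0$ there, giving $\eta_t = 0$ pointwise and therefore $\eta = 0$ a.e.\ on $\{|q|<\beta\}$. On the biactive portion $\{u=0,\,|q|=\beta\}$, the complementarity $q_t u_t = \beta|u_t|$ combined with continuity of $q$ forces $u_t$ to inherit the sign of $q$ in a neighborhood, so $q\,\eta_t \geq 0$ pointwise and $q\,\eta \geq 0$ in the limit.

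Next I would show $\lambda$ belongs to the normal cone of $\mathcal K(u)$ at $\eta$, i.e.\ $\dual{\lambda}{v-\eta} \geq 0$ for every $v \in \mathcal K(u)$; together with $A\eta + \lambda = h$ this is exactly \eqref{eq:ablvi}. The argument is to construct admissible trial functions $v_t = u_t \pm t v$ in the original VI at $u_t$ and $v_t = u \pm t v$ in the VI at $u$: using that $v$ vanishes on $\{|q|<\beta\}$ one controls $\beta|v_t|-\beta|u_t|$ (respectively $\beta|v_t|-\beta|u|$) pointwise by $\pm t q \cdot v$, so after adding the two inequalities, dividing by $t$ and passing to the limit the biactive sign condition delivers the desired inequality. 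Finally, the VI \eqref{eq:ablvi} on the closed convex cone $\mathcal K(u)$ has a unique solution by Stampacchia's theorem and the ellipticity of $A$, which upgrades the weak convergence to the whole family $t \searrow 0$ as claimed in \eqref{eq:weaklim}.

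The hardest step will be the analysis on the biactive set. Extracting sign information on $u_t$ from the scalar complementarity relation in the limit $t\searrow 0$ requires that the set $\{|q|<\beta\}$ does not asymptotically invade the biactive region; this is precisely the role of Assumption \ref{assu: structural}, which through the decomposition $\mathcal A = \mathcal A_1 \cup \mathcal A_0$ with $\mathcal A_0$ of zero capacity and the cone condition on $\partial \mathcal A_1$ prevents pathological clustering. Without such a structural hypothesis, neither the pointwise identification of $\eta$ on the active set nor the sign argument giving $\lambda \in N_{\mathcal K(u)}(\eta)$ would close, which is consistent with the counterexamples to polyhedricity noted in the subsequent commentary.
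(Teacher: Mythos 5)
The paper itself does not prove Theorem \ref{thm:ablvi}; it is quoted from the literature (\cite{DelosReyesMeyer2015}), so your proposal can only be measured against the known argument there. Your overall architecture is the right one (difference quotients $\eta_t=(u_t-u)/t$, the uniform bound from Lemma \ref{lem:lipschitz}, weak subsequential limits of $\eta_t$ and of $(q_t-q)/t$, identification of the limit as the solution of the first-kind VI on the critical cone, and uniqueness to upgrade subsequences to the full family). However, the two central identification steps are carried out with arguments that do not close. First, your pointwise reasoning on the active and biactive sets relies on ``the smallness of $q_t-q$ inherited from the Lipschitz property'' to deduce $|q_t(x)|<\beta$, respectively $\operatorname{sign}$ agreement of $q_t$ and $q$. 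But the Lipschitz estimate only controls $q_t-q$ in $V'$ (at rate $O(t)$); it gives no uniform or pointwise convergence of $q_t$, so you cannot conclude $|q_t|<\beta$ locally, nor that $u_t$ inherits the sign of $q$. The correct route is integral, not pointwise: testing $A\eta_t+(q_t-q)/t=h$ with $\eta_t$ and using monotonicity of the subdifferential gives $t^{-2}\int_\Omega (q_t-q)(u_t-u)\,dx\le C$, and expanding the integrand on $\{u=0\}$ yields $\int_{\{u=0\}}(\beta-|q|)\,|\eta_t|\,dx\le Ct$ and $\int_{\mathcal B}(\beta|\eta_t|-q\,\eta_t)\,dx\le Ct$, from which both membership conditions of $\mathcal K(u)$ follow in the limit via the compact embedding. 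No continuity of $q_t$ is needed for this part.

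Second, and more seriously, your normal-cone step ignores the term that is actually hard and that motivates Assumption \ref{assu: structural}. Testing the identity $A\eta_t+(q_t-q)/t=h$ with $v\in\mathcal K(u)$, the contributions of $t^{-1}\int (q_t-q)v\,dx$ on $\{|q|<\beta\}$ and on $\mathcal B$ have the right sign for free (there $v=0$, respectively $(q_t-q)v\le0$ since $|q_t|\le\beta$ and $vq\ge0$), but on the inactive set $\{u\ne0\}$ the quantity $t^{-1}(q_t-q)v$ is not controlled: $q_t-q$ vanishes for $t$ small only pointwise, non-uniformly, and near $\partial\mathcal A_1$ the estimate degenerates like $|v|/|u|$. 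This is precisely where the continuity of $u$ and $q$ and the regularity of $\partial \mathcal A_1$ (cone condition, zero capacity of $\mathcal A_0$) enter: one first proves the inequality for test functions in a dense subclass of $\mathcal K(u)$ that behave well across $\partial\mathcal A_1$, and the structural assumption is what guarantees that such functions are dense in $\mathcal K(u)$. Your reading of Assumption \ref{assu: structural} as preventing $\{|q|<\beta\}$ from ``invading'' the biactive region misattributes its role, and your construction $v_t=u_t\pm tv$ does not address the inactive-set term at all (pointwise linearization of $|u_t+tv|-|u_t|$ again presupposes pointwise control of $u_t$ that is not available uniformly). As written, the proof would not go through; the skeleton is sound but the load-bearing estimates must be replaced by the monotonicity/density arguments sketched above.
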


The last theorem establishes a directional differentiability result for the solution operator in a weak sense. Composed with the quadratic structure of the tracking type cost functional, the directional differentiability of the reduced cost is obtained. In case $\mathcal B = \emptyset,$ the result can be further improved and G\^ateaux differentiability is obtained.

Theorem \ref{thm:ablvi} was recently generalized in \cite{christofmeyer2017}, where in addition to improving Assumption \ref{assu: structural}, semilinear terms in the inequality were considered.

Similarly as for optimal control of obstacle problems (see \cite{MignotPuel1984}), a stronger stationarity condition can only be obtained if the control is of distributed type and no control constraints are imposed (see \cite{wachsmuth2014strong} for further details on the presence of control constraints). In the following theorem a strong stationary optimality system is established for the model optimal control problem \eqref{eq: optimal control problem}.


\begin{theorem}
  Let $f \in L^2(\Omega)$ be a local optimal solution of \eqref{eq: optimal control problem} and $u \in V$ its corresponding state. Suppose that Assumption \ref{assu: structural} holds and assume further that $u,q \in C(\bar{\Omega})$. Then there exist multipliers $p \in V$ and $\xi \in V'$ such that the following system holds:
  \begin{subequations} \label{eq: strong stationary OS}
  \begin{align}
  & \dual{Au}{v}+ \int_\Omega q \, v~dx = \dual{f}{v},&& \text{ for all } v \in V,\\
  & q(x)  u(x) =\beta | u(x)| && \text{ a.e. in }\Omega,\\
  & |q(x)| \leq \beta && \text{ a.e. in }\Omega,\\
  & \dual{A^* p}{v} +\langle \xi, v \rangle = \int_\Omega (u-z_d) \,v ~dx,&& \text{ for all } v \in V,\\
  & \alpha f+p=0&& \text{ a.e. in }\Omega,
\end{align}
  and, additionally,
  \begin{align}
  & p(x)=0 &&\text{a.e. in }\mathcal I:=\{ x: |q(x)| < \beta \},\\
  &p(x) q(x)=0 && \text{a.e. in } \mathcal B,\\
   &\dual{\xi}{v} \geq 0, && \forall v \in V: v(x)=0 \text{ if } |q(x)|< \beta \land v(x) q(x) \geq 0 \text{ a.e. in } \mathcal B,
\end{align}
\end{subequations}
 where $\mathcal B := \{x \in \Omega: u(x)=0 \land |q(x)|=\beta \}$.
\end{theorem}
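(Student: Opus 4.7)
My plan is to take the weaker C-stationarity system~\eqref{eq: OS with local regularization} from the previous theorem as a starting point and sharpen its sign and complementarity conditions by exploiting the directional differentiability result of Theorem~\ref{thm:ablvi}. The previous theorem already supplies multipliers $p\in V$ and $\xi\in V'$ satisfying the primal-dual and adjoint equations, the gradient equation $\alpha f+p=0$, the inactive condition $p=0$ on $\mathcal I$, and the scalar relations $\dual{\xi}{p}\geq 0$ and $\dual{\xi}{u}=0$. Only the sharpened sign condition $\dual{\xi}{v}\geq 0$ for all $v\in\mathcal K(u)$ and the biactive complementarity $pq=0$ on $\mathcal B$ then remain to be established.

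To this end I invoke Theorem~\ref{thm:ablvi}: under Assumption~\ref{assu: structural} and $u,q\in C(\bar\Omega)$, the control-to-state map $S:L^2(\Omega)\to V$ is Hadamard directionally differentiable at $f$, with $\eta:=S'(f;h)\in\mathcal K(u)$ solving the critical-cone VI~\eqref{eq:ablvi}. The chain rule together with the local optimality of $f$ yields
\begin{equation*}
  \int_\Omega (u-z_d)\,\eta\,dx + \alpha\int_\Omega f\,h\,dx \geq 0,\qquad \forall\, h\in L^2(\Omega). \tag{$\star$}
\end{equation*}
Substituting $\alpha f=-p$ and the adjoint equation into $(\star)$ turns it into $\dual{\xi}{\eta}+\dual{A\eta}{p}-\int_\Omega p\,h\,dx\geq 0$. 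Testing the $\eta$-VI with $v=2\eta$ and $v=0$ (both admissible since $\mathcal K(u)$ is a cone) gives the identity $\dual{A\eta}{\eta}=\int_\Omega h\,\eta\, dx$, while testing with $v=p$ (once $p\in\mathcal K(u)$ has been verified) yields $\dual{A\eta}{p}\geq \int_\Omega h\,p\, dx$. Feeding these back into $(\star)$ leaves $\dual{\xi}{\eta}\geq 0$ for every $\eta$ in the image of $h\mapsto S'(f;h)$. A density argument — approximating $A\zeta\in V'$ by $L^2$-data for arbitrary $\zeta\in\mathcal K(u)$ and using the Lipschitz continuity of the VI solution map in its right-hand side (cf.\ Lemma~\ref{lem:lipschitz}) — then produces $\dual{\xi}{v}\geq 0$ for every $v\in\mathcal K(u)$. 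The complementarity $pq=0$ on $\mathcal B$ finally follows by bilateral testing of the sharpened sign condition against $\pm\zeta\in\mathcal K(u)$ supported on $\mathcal B$, combined with $\dual{\xi}{p}=0$ and the adjoint equation.

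The main obstacle is the self-referential step of showing that $p\in\mathcal K(u)$, i.e.\ that $pq\geq 0$ on $\mathcal B$, since this is needed inside the very derivation that is meant to produce the sharp sign condition. Its resolution requires a careful bootstrapping between the C-stationarity information and the critical-cone VI for $\eta$, making essential use of Assumption~\ref{assu: structural} (which ensures enough regularity of the active set for the construction of localized test functions in $\mathcal K(u)$), the continuity of $u,q\in C(\bar\Omega)$ (so that the cone constraints become meaningful pointwise statements), and the unconstrained $L^2$-nature of the distributed control $f$. Any control constraint or non-distributed control would obstruct the density step and leave only the weaker system~\eqref{eq: OS with local regularization}.
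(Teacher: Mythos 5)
First, a remark on the benchmark: the paper states this theorem without giving a proof (it is a survey, and the result is imported from the literature, cf.\ \cite{DelosReyesMeyer2015}), so your proposal has to be measured against the standard Mignot--Puel-type argument the paper alludes to. Your overall strategy --- combine the B-stationarity inequality $(\star)$ coming from Theorem \ref{thm:ablvi} with the critical-cone VI \eqref{eq:ablvi} for $\eta$, reusing the multipliers $p,\xi$ of \eqref{eq: OS with local regularization} --- is the right one. However, your central algebraic step does not close. From $(\star)$ you correctly arrive at $\dual{A\eta}{p}+\dual{\xi}{\eta}-\int_\Omega p\,h\,dx\geq 0$, and testing \eqref{eq:ablvi} with $v=p$ (granting for the moment $p\in\mathcal K(u)$) gives $\dual{A\eta}{p}\geq\dual{h}{p}$. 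These combine to $\dual{\xi}{\eta}\geq \dual{h}{p}-\dual{A\eta}{p}$, whose right-hand side is \emph{nonpositive} by the very inequality you just derived; you obtain $\dual{\xi}{\eta}\geq(\text{something}\leq 0)$, which is vacuous. The inequality you would need points the other way. The standard repair avoids $p$ altogether: for a given $v\in\mathcal K(u)$ choose the direction $h=Av$ (approximated in $V'$ by $L^2\cap L^r$ data, exactly as in your density step, using the Lipschitz dependence of the cone-VI solution on its right-hand side); then $\eta=v$ solves \eqref{eq:ablvi} with equality, and $(\star)$ reads directly $\dual{\xi}{v}=\int_\Omega(u-z_d)v\,dx-\dual{A^*p}{v}\geq 0$. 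This also dissolves the circularity you flag, because $p\in\mathcal K(u)$ is never needed.

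Second, the two pointwise relations that actually distinguish \eqref{eq: strong stationary OS} from \eqref{eq: OS with local regularization} are not established. You defer the ``self-referential'' issue around $\mathcal B$ to an unspecified ``careful bootstrapping'', which is precisely the content a proof must supply, and $p=0$ on $\mathcal I$ is simply imported from the C-stationarity system (legitimate only after noting that $p=-\alpha f$ is pinned down by the gradient equation, so the multipliers of the two systems coincide). The way these relations are actually obtained is by inserting tailored directions into $(\star)$: for $w$ supported on $\{|q|<\beta\}$ every element of $\mathcal K(u)$ vanishes there, so $\Sigma(\pm w)=0$ and $(\star)$ forces $\int_\Omega p\,w\,dx=0$, i.e.\ $p=0$ a.e.\ on $\mathcal I$; for $w$ supported on $\mathcal B$ with $wq\leq 0$ one checks $\Sigma(w)=0$ as well, whence $(\star)$ yields $pq\geq 0$ a.e.\ on $\mathcal B$, and the reverse sign requires a further argument exploiting the sign condition on $\xi$ and the structure of $\mathcal K(u)$ guaranteed by Assumption \ref{assu: structural} and the continuity of $u$ and $q$. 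Without these constructions your proposal proves neither $p=0$ on $\mathcal I$ (beyond quoting it), nor $pq=0$ on $\mathcal B$, nor --- because of the reversed inequality above --- the sign condition on $\xi$.
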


Optimality system \eqref{eq: strong stationary OS} is sharper than \eqref{eq: OS with local regularization} since it includes a pointwise relation between the adjoint state and the dual multiplier on the biactive set and a sign condition on the additional multiplier $\xi$ for a specific test function set. However, it is worth noting that the required assumptions for getting this result are quite strong and not extendable to several cases of interest, like boundary control or control constrained problems.

Finally, let us recall that in case $\mathcal B = \emptyset,$ the problem becomes smooth and optimality systems \eqref{eq: OS with local regularization} and \eqref{eq: strong stationary OS} coincide.


\section{Challenges and perspectives}
Although several efforts have been made in the study of \emph{variational inequalities of the second kind} and their \emph{optimal control}, the topic is still very active and full of challenges. We comment next on some extensions, open problems and future perspectives within this field.


\subsection{Different operators $K$}
The model optimal control problem previously considered deals with the $K$ operator equal to the identity. For more complex cases, such as the gradient, the trace of a function, etc., analytical complications arise that prevent an immediate extension of the previous results.

The case of the operator $K = \nabla$ is of particular interest due to its applicability in viscoplastic fluid mechanics (see Section 2). However, the loss of regularity that occurs when applying the gradient leads to major difficulties in the analysis of the differentiability of the solution operator.

Optimal control problems within this context have been considered in \cite{Delosreyes2009,dlRe2010,dlReSchoen2013} with applications to viscoplastic fluids, contact mechanics and imaging. Existing results concern the characterization of Clarke stationary points and their numerical approximation. The characterization of strong stationary points is still an open problem.

\subsection{Time dependent problems}
Another important extension which has not been deeply explored yet is the optimal control of time-dependent variational inequalities of the second kind. Here, the inequality that governs the phenomena is of parabolic type and given by: Find $u \in L^2(0,T;V)$ such that
\begin{multline*} \label{eq:parabolic VI}
  \dual{\frac{\partial u}{\partial t}}{v-u} + \dual{Au}{v-u} + \beta \int_{\S} |K v| ~ds\\ - \beta \int_{\S} |K u| ~ds \geq \dual{f}{v-u}, \text{ for all } v \in V, \text{ a.e. }t \in ]0,T[,
\end{multline*}
complemented with an initial condition $u(0)=u_0 \in V.$ Existence, uniqueness and regularity of solutions for these types of inequalities have been investigated in the past \cite{DuvautLions1976}. Moreover, there are several numerical approaches for the solution of these problems (see \cite{Glowinski} and the references therein).

The corresponding optimal control problems have been approached only from a general perspective, with regularizations of global type \cite{Barbu1993}. The study of different aspects such as existence, optimality conditions, Pontryagin's maximum principle, solution regularity, sufficient conditions, etc. are still open and deserve to be investigated in the coming future. Even more so, since the obtention of results for the optimal control of time-dependent variational inequalities of the first kind has been proved to be very challenging.

\bibliographystyle{plain}
\bibliography{../../biblio}

\begin{thebibliography}{10}

\bibitem{AposporidisEtAl2010}
A.~Aposporidis, E.~Haber, M.A. Olshanskii, and A.~Veneziani.
\newblock {An augmented formulation of the incompressible Bingham fluid
  problem: Analysis and numerical solution}.

\bibitem{attouch2014variational}
Hedy Attouch, Giuseppe Buttazzo, and G{\'e}rard Michaille.
\newblock {\em Variational analysis in Sobolev and BV spaces: applications to
  PDEs and optimization}.
\newblock SIAM, 2014.

\bibitem{baiocchivariational}
C~Baiocchi and A~Capelo.
\newblock {\em Variational and Quasi-variational Inequalities. 1984}.
\newblock Wiley, New York, 1984.

\bibitem{Barbu1984}
V.~Barbu.
\newblock {\em Optimal control of variational inequalities}.
\newblock Pitman, Boston, 1984.

\bibitem{Barbu1993}
V.~Barbu.
\newblock {\em Analysis and Control of nonlinear infinite dimensional systems}.
\newblock Academic Press, New York, 1993.

\bibitem{bartels2008thermoviscoplasticity}
S{\"o}ren Bartels and Tom{\'a}{\v{s}} Roub{\'\i}{\v{c}}ek.
\newblock Thermoviscoplasticity at small strains.
\newblock {\em ZAMM-Journal of Applied Mathematics and Mechanics/Zeitschrift
  f{\"u}r Angewandte Mathematik und Mechanik}, 88(9):735--754, 2008.

\bibitem{bartels2013numerical}
S{\"o}ren Bartels and Tom{\'a}{\v{s}} Roub{\'\i}{\v{c}}ek.
\newblock Numerical approaches to thermally coupled perfect plasticity.
\newblock {\em Numerical Methods for Partial Differential Equations},
  29(6):1837--1863, 2013.

\bibitem{BeremlijskiEtAl2002}
P.~Beremlijski, J.~Haslinger, M.~Ko{\v{c}}vara, and J.~Outrata.
\newblock Shape optimization in contact problems with {C}oulomb friction.
\newblock {\em SIAM J. Optim.}, 13(2):561--587 (electronic), 2002.

\bibitem{BergouniouxMignot2000}
Ma{\"{\i}}tine Bergounioux and Fulbert Mignot.
\newblock Optimal control of obstacle problems: existence of {L}agrange
  multipliers.
\newblock {\em ESAIM Control Optim. Calc. Var.}, 5:45--70 (electronic), 2000.

\bibitem{BermudezSaguez1987}
A.~Berm{\'u}dez and C.~Saguez.
\newblock Optimal control of a {S}ignorini problem.
\newblock {\em SIAM J. Control Optim.}, 25(3):576--582, 1987.

\bibitem{bernabeu2016modelling}
No{\'e} Bernabeu, Pierre Saramito, and Claude Smutek.
\newblock Modelling lava flow advance using a shallow-depth approximation for
  three-dimensional cooling of viscoplastic flows.
\newblock {\em Geological Society, London, Special Publications},
  426(1):409--423, 2016.

\bibitem{betz2015optimal}
Thomas Betz.
\newblock {\em Optimal control of two variational inequalities arising in solid
  mechanics}.
\newblock PhD thesis, Technische Universit{\"a}t Dortmund, 2015.

\bibitem{Bonn}
D.~Bonn and M.D. Morton.
\newblock Yield stress fluids slowly yield to analysis.
\newblock {\em Science}, 324(5933):1401--1402, 2009.

\bibitem{BonnansTiba1991}
Joseph~Fr{\'e}d{\'e}ric Bonnans and Dan Tiba.
\newblock {P}ontryagin's principle in the control of semilinear elliptic
  variational inequalities.
\newblock {\em Applied Mathematics and Optimization}, 23:299--312, 1991.

\bibitem{Brezis1971}
H.~Br\'ezis.
\newblock Monotonicity methods in {H}ilbert spaces and some applications to
  nonlinear partial differential equations.
\newblock In {\em Contributions to Non-linear Functional Analysis (Proc.
  Sympos. Univ. Wisconsin, Madison)}, pages 101--156. Acad. Press, New York,
  1971.

\bibitem{caffarelli2005geometric}
Luis~A Caffarelli and Sandro Salsa.
\newblock {\em A geometric approach to free boundary problems}, volume~68.
\newblock American Mathematical Society Providence, RI, 2005.

\bibitem{chelminski2006mathematical}
K~Che{\l}mi{\'n}ski and Reinhard Racke.
\newblock Mathematical analysis of thermoplasticity with linear kinematic
  hardening.
\newblock {\em Journal of Applied Analysis}, 12(1):37--57, 2006.

\bibitem{christofmeyer2017}
Constantin Christof and Christian Meyer.
\newblock Differentiability properties of the solution operator to an elliptic
  variational inequality of the second kind.
\newblock Technical report, Technische Universit{\"a}t Dortmund, Fakult{\"a}t
  f{\"u}r Mathematik, 2015.

\bibitem{christof2017non}
Constantin Christof and Gerd Wachsmuth.
\newblock On the non-polyhedricity of sets with upper and lower bounds in dual
  spaces.
\newblock {\em arXiv preprint arXiv:1711.02588}, 2017.

\bibitem{CiarletGlowinski2010}
Philippe Ciarlet, Roland Glowinski, and Jinchao Xu.
\newblock {\em {H}andbook of {N}umerical {A}nalysis, {V}ol. {XVI}, {N}umerical
  {M}ethods for {N}on-{N}ewtonian {F}luids}, volume~16.
\newblock Elsevier, North-Holland, 2010.

\bibitem{costa2005computational}
Antonio Costa and Giovanni Macedonio.
\newblock Computational modeling of lava flows: A review.
\newblock {\em Geological Society of America Special Papers}, 396:209--218,
  2005.

\bibitem{dlRG2}
J.C. De~los Reyes and S.~Gonz\'alez.
\newblock Numerical simulation of two-dimensional {B}ingham fluid flow by
  semismooth {N}ewton methods.
\newblock {\em Journal of Computational and Applied Mathematics},
  235(1):11--32, 2010.

\bibitem{Delosreyes2009}
Juan~Carlos De~los Reyes.
\newblock Optimal control of a class of variational inequalities of the second
  kind.
\newblock {\em SIAM Journal on Control and Optimization}, 49:1629--1658, 2011.

\bibitem{dlRe2010}
Juan~Carlos De~los Reyes.
\newblock Optimization of mixed variational inequalities arising in flows of
  viscoplastic materials.
\newblock {\em Computational Optimization and Applications}, 52:757--784, 2012.

\bibitem{de2015numerical}
Juan~Carlos De~los Reyes.
\newblock {\em Numerical PDE-constrained optimization}.
\newblock Springer, 2015.

\bibitem{dlRHM13}
Juan~Carlos De~los Reyes, Roland Herzog, and Christian Meyer.
\newblock Optimal control of static elastoplasticity in primal formulation.
\newblock {\em SIAM Journal on Control and Optimization}, 54(6):3016--3039,
  2016.

\bibitem{DelosReyesMeyer2015}
Juan~Carlos De~los Reyes and Christian Meyer.
\newblock Strong stationarity conditions for a class of optimization problems
  governed by variational inequalities of the second kind.
\newblock {\em Journal of Optimization Theory and Applications}, DOI:
  10.1007/s10957-015-0748-2, 2015.

\bibitem{dlReSchoen2013}
Juan~Carlos De~los Reyes and Carola-Bibiane Sch{\"o}nlieb.
\newblock Image denoising: Learning the noise model via nonsmooth
  {PDE}-constrained optimization.
\newblock {\em Inverse Problems \& Imaging}, 7(4), 2013.

\bibitem{dietrich2001optimal}
Helmut Dietrich.
\newblock Optimal control problems for certain quasivariational
  inequalities∗.
\newblock {\em Optimization}, 49(1-2):67--93, 2001.

\bibitem{DuvautLions1972}
G.~Duvaut and J.-L. Lions.
\newblock {\em Les in\'equations en m\'ecanique et en physique}.
\newblock Dunod, Paris, 1972.
\newblock Travaux et Recherches Math{\'e}matiques, No. 21.

\bibitem{DuvautLions1976}
G.~Duvaut and J.-L. Lions.
\newblock {\em Inequalities in mechanics and physics}.
\newblock Springer-Verlag, Berlin, 1976.
\newblock Translated from the French by C. W. John, Grundlehren der
  Mathematischen Wissenschaften, 219.

\bibitem{Fink}
Jonathan H.~Fink (Ed.).
\newblock {\em Lava Flows and Domes}.
\newblock Springer Verlag, 1990.

\bibitem{EkeTemam}
Ivar Ekeland and Roger Temam.
\newblock {\em Convex analysis and variational problems}, volume~28 of {\em
  Classics in Applied Mathematics}.
\newblock Society for Industrial and Applied Mathematics (SIAM), Philadelphia,
  PA, english edition, 1999.
\newblock Translated from the French.

\bibitem{friedman2010variational}
Avner Friedman.
\newblock {\em Variational principles and free-boundary problems}.
\newblock Courier Corporation, 2010.

\bibitem{FuchsSeregin}
Martin Fuchs and Gregory Seregin.
\newblock {\em Variational methods for problems from plasticity theory and for
  generalized {N}ewtonian fluids}, volume 1749 of {\em Lecture Notes in
  Mathematics}.
\newblock Springer-Verlag, Berlin, 2000.

\bibitem{Glowinski}
R.~Glowinski.
\newblock {\em Numerical {M}ethods for {N}onlinear {V}ariational {P}roblems}.
\newblock Springer Series in Computational Physics. Springer-Verlag, 1984.

\bibitem{gri85}
P.~Grisvard.
\newblock {\em Elliptic Problems in Nonsmooth Domains}.
\newblock Pitman, Boston, 1985.

\bibitem{han2012plasticity}
Weimin Han and Daya Reddy.
\newblock {\em Plasticity: mathematical theory and numerical analysis},
  volume~9.
\newblock Springer Science \& Business Media, 2012.

\bibitem{herzog2015existence}
Roland Herzog, Christian Meyer, and Ailyn Sch{\"a}fer.
\newblock Existence of solutions of a (nonsmooth) thermoviscoplastic model and
  associated optimal control problems.
\newblock Technical report, Technische Universit{\"a}t Dortmund, Fakult{\"a}t
  f{\"u}r Mathematik, 2015.

\bibitem{herzog2012c}
Roland Herzog, Christian Meyer, and Gerd Wachsmuth.
\newblock C-stationarity for optimal control of static plasticity with linear
  kinematic hardening.
\newblock {\em SIAM Journal on Control and Optimization}, 50(5):3052--3082,
  2012.

\bibitem{HerzogMeyerWachsmuth}
Roland Herzog, Christian Meyer, and Gerd Wachsmuth.
\newblock B-and strong stationarity for optimal control of static plasticity
  with hardening.
\newblock {\em SIAM Journal on Optimization}, 23(1):321--352, 2013.

\bibitem{herzog2014optimal}
Roland Herzog, Christian Meyer, and Gerd Wachsmuth.
\newblock Optimal control of elastoplastic processes: Analysis, algorithms,
  numerical analysis and applications.
\newblock In {\em Trends in PDE constrained optimization}, pages 27--41.
  Springer, 2014.

\bibitem{hintermuller2017directional}
Michael Hinterm{\"u}ller and Thomas Surowiec.
\newblock On the directional differentiability of the solution mapping for a
  class of variational inequalities of the second kind.
\newblock {\em Set-Valued and Variational Analysis}, pages 1--12, 2017.

\bibitem{huilgol2015fluid}
Raja~R. Huilgol.
\newblock {\em Fluid mechanics of viscoplasticity}.
\newblock Springer, 2015.

\bibitem{ItoKunischBook}
Kazufumi Ito and Karl Kunisch.
\newblock {\em Lagrange multiplier approach to variational problems and
  applications}, volume~15 of {\em Advances in Design and Control}.
\newblock Society for Industrial and Applied Mathematics (SIAM), Philadelphia,
  PA, 2008.

\bibitem{jarusekoutrata2007}
Ji{\v{r}}{\'{\i}} Jaru{\v{s}}ek and Ji{\v{r}}{\'{\i}}~V. Outrata.
\newblock On sharp necessary optimality conditions in control of contact
  problems with strings.
\newblock {\em Nonlinear Anal.}, 67(4):1117--1128, 2007.

\bibitem{KikuchiOden}
N.~Kikuchi and J.~T. Oden.
\newblock {\em Contact problems in elasticity: a study of variational
  inequalities and finite element methods}, volume~8 of {\em SIAM Studies in
  Applied Mathematics}.
\newblock Society for Industrial and Applied Mathematics (SIAM), Philadelphia,
  PA, 1988.

\bibitem{Stampacchia/Kinderlehrer}
David Kinderlehrer and Guido Stampacchia.
\newblock {\em An introduction to variational inequalities and their
  applications}, volume~31.
\newblock SIAM, 2000.

\bibitem{Li1}
J.~L. Lions.
\newblock {\em Optimal Control of Systems Governed by Partial Differential
  Equations}.
\newblock Springer, New York, 1971.

\bibitem{Mignot1976}
F.~Mignot.
\newblock Controle dans les in\'equations variationelles elliptiques.
\newblock {\em Journal of Functional Analysis}, 22:130--185, 1976.

\bibitem{MignotPuel1984}
F.~Mignot and J.-P. Puel.
\newblock Optimal control in some variational inequalities.
\newblock {\em SIAM J. Control Optim.}, 22(3):466--476, 1984.

\bibitem{MoMia}
P.P. Mosolov and V.P. Miasnikov.
\newblock Variational methods in the theory of the fluidity of a viscous
  plastic medium.
\newblock {\em Journal of Applied Mathematics and Mechanics (PMM)},
  29:468--492, 1965.

\bibitem{MuravlevaOlshanskii2009}
E.~A. Muravleva and M.~A. Olshanskii.
\newblock Two finite-difference schemes for calculation of {B}ingham fluid
  flows in a cavity.
\newblock {\em Russian J. Numer. Anal. Math. Modelling}, 23(6):615--634, 2008.

\bibitem{ottosen2005mechanics}
Niels~Saabye Ottosen and Matti Ristinmaa.
\newblock {\em The mechanics of constitutive modeling}.
\newblock Elsevier, 2005.

\bibitem{pontryagin1987mathematical}
Lev~Semenovich Pontryagin.
\newblock {\em Mathematical theory of optimal processes}.
\newblock CRC Press, 1987.

\bibitem{SofoneaMatei2009}
Mircea Sofonea and Andaluzia Matei.
\newblock {\em Variational inequalities with applications}, volume~18 of {\em
  Advances in Mechanics and Mathematics}.
\newblock Springer, New York, 2009.
\newblock A study of antiplane frictional contact problems.

\bibitem{sun2006optimization}
Wenyu Sun and Ya-Xiang Yuan.
\newblock {\em Optimization theory and methods: nonlinear programming},
  volume~1.
\newblock Springer Science \& Business Media, 2006.

\bibitem{temamplast}
Roger Temam.
\newblock {\em Mathematical problems in plasticity}.
\newblock Gauthier-Villars, 1985.

\bibitem{TRESKATIS2016115}
Timm Treskatis, Miguel~A. Moyers-González, and Chris~J. Price.
\newblock An accelerated dual proximal gradient method for applications in
  viscoplasticity.
\newblock {\em Journal of Non-Newtonian Fluid Mechanics}, 238:115 -- 130, 2016.

\bibitem{troltzsch2010optimal}
Fredi Tr{\"o}ltzsch.
\newblock Optimal control of partial differential equations.
\newblock {\em Graduate studies in mathematics}, 112, 2010.

\bibitem{wachsmuth2014strong}
Gerd Wachsmuth.
\newblock Strong stationarity for optimal control of the obstacle problem with
  control constraints.
\newblock {\em SIAM Journal on Optimization}, 24(4):1914--1932, 2014.

\bibitem{wriggers2006computational}
Peter Wriggers.
\newblock {\em Computational contact mechanics}.
\newblock Springer Science \& Business Media, 2006.

\end{thebibliography}
\end{document}